\newtheorem{thm}{Theorem}[section]
\newtheorem{prop}[thm]{Proposition}
\newtheorem{lemma}[thm]{Lemma}
\newtheorem{cor}[thm]{Corollary}
\theoremstyle{definition}
\newtheorem{defn}{Definition}[section]
\numberwithin{equation}{section}
\DeclareMathAlphabet{\matheur}{U}{eur}{m}{n}
\DeclareMathAlphabet{\matheus}{U}{eus}{m}{n}
\DeclareMathAlphabet{\matheuf}{U}{euf}{m}{n}
\newcommand{\abs}[1]{\left\lvert#1\right\rvert}
\newcommand{\R}{{\mathbb R}}
\newcommand{\C}{{\mathbb C}}
\newcommand{\cO}{{\mathcal O}}
\author{Vestislav Apostolov}
\address{D\'epartement de math\'ematiques, Universit\'e du Qu\'ebec \`a Montr\'eal, Case postale 8888, succursale centre-ville Montr\'eal (Qu\'ebec) H3C 3P8, Canada}
\email{\tt  apostolov.vestislav@uqam.ca}
\author{Dmitry Jakobson}
\address{Department of Mathematics and Statistics, McGill University, 805 Sherbrooke Street West,
Montr\'eal (Qu\'ebec) H3A0B9, Canada}
\email{\tt  jakobson@math.mcgill.ca }
\author{Gerasim Kokarev}
\address{Mathematisches Institut der Universit\"at M\"unchen, Theresienstr. 39, D-80333 M\"unchen, Germany}
\email{\tt Gerasim.Kokarev@math.lmu.de}
\subjclass[2010]{49R05,35P99,53C55,32Q20} \keywords{Laplacian eigenvalues, extremal metric, K\"ahler manifold, K\"ahler-Einstein manifold.}
\title{An extremal eigenvalue problem in K\"ahler geometry}
\begin{document}

\dedicatory{Dedicated to Professor Paul Gauduchon on the occasion of his 70'th birthday.}

\begin{abstract} 
We study Laplace eigenvalues $\lambda_k$ on K\"ahler manifolds as functionals on the space of K\"ahler metrics with cohomologous K\"ahler forms. We introduce a natural notion of a $\lambda_k$-extremal K\"ahler metric and obtain necessary and sufficient conditions for it. A particular attention is paid to the $\lambda_1$-extremal properties of K\"ahler-Einstein metrics of positive scalar curvature on manifolds with non-trivial holomorphic vector fields.
\end{abstract}

\maketitle


\section{Introduction}
\subsection{Motivation}
Let $M$ be a closed manifold. For a Riemannian metric $g$ on $M$ we denote by
$$
0=\lambda_0(g)<\lambda_1(g)\leqslant\lambda_2(g)\leqslant\ldots\leqslant\lambda_k(g)\leqslant\ldots
$$
the eigenvalues of the Laplace-Beltrami operator $\Delta_g$ repeated according to their multiplicity. In real dimension $2$, by classical results of Hersch~\cite{He70}, Yang and Yau~\cite{YY}, and Li and Yau~\cite{LY}, the first eigenvalue $\lambda_1(g)$ is bounded when the Riemannian metric $g$ ranges over metrics of fixed volume. A basic question is: for a given conformal class $c$ on $M$, is there a metric that maximizes $\lambda_1(g)$ among metrics $g\in c$ with ${\rm vol}(M,g)= 1$? What are its properties? When $M$ is a sphere or a projective plane, the answers go back to the classical results of Hersch~\cite{He70} and Li and Yau~\cite{LY}. For higher genus surfaces this circle of questions have been studied extensively in the last decades, see~\cite{El00,El03, EGJ, JLNNP, JNP06, Na96} and the most recent papers~\cite{Ko,Na10,Pet}. 

In particular, Nadirashvili and Sire~\cite{Na10}, developing earlier ideas by Nadirashvili~\cite{Na96}, have stated an existence theorem for $\lambda_1$-maximizers in conformal classes along with an outlined proof. This  statement has been improved by Petrides~\cite{Pet}, who has also given a rigorous argument for it, using previous work~\cite{FS2,Ko}. Mention that the $\lambda_1$-maximizers given by the above existence theorems may have conical singularities, and can be described as metrics that admit harmonic maps into round spheres by their first eigenfunctions. The latter statement actually holds for arbitrary $\lambda_1$-maximizers (and even $\lambda_k$-extremals for any $k\geqslant 1$) with conical singularities, see~\cite{Ko}, where many more general statements in this direction have been proven. Besides, the combination of~\cite[Theorem~1]{Pet} and~\cite[Theorem~$E_1$]{Ko} shows that the set of all conformal $C^{\infty}$-metrics with conical singularities that maximize $\lambda_1$ is compact, see also~\cite{Ko10}.

\subsection{Eigenvalue problems on K\"ahler manifolds}
The purpose of this paper is to describe an extremal eigenvalue problem in higher dimensions on a K\"ahler manifold $(M,J,g,\omega)$, which generalizes the above extremal problem on Riemannian surfaces. Here we view the eigenvalue $\lambda_k(g)$ as a functional on the space $\mathcal K_\Omega(M,J)$ that is formed by K\"ahler metrics $g$ whose K\"ahler forms $\omega$ represent a given de Rham cohomology class $\Omega$. When $(M,J,g)$ is a Riemannian surface with a volume form $\omega$, the space $\mathcal K_{[\omega]}(M,J)$ is precisely the set of metrics of a fixed area that are conformal to $g$, see the discussion in Sect.~\ref{s:extremal-general}. By classical work due to Bourguignon, Li, and Yau~\cite{BLY}, the first eigenvalue $\lambda_1(g)$ is bounded on $\mathcal K_\Omega(M,J)$ when $(M,J)$ is projective and the de Rham class $\Omega$ belongs to $H^2(M,\mathbb Q)$. The eigenvalue bound in~\cite{BLY} shows that the Fubini-Study metric on ${\mathbb C} P^m$ is a $\lambda_1$-maximizer in its K\"ahler class. Recently, these results have been generalized by Arezzo, Ghigi, and Loi~\cite{AGL} to more general K\"ahler manifolds that admit holomorphic stable vector bundles over $M$ with sufficiently many sections. In particular, they show that the symmetric K\"ahler-Einstein metrics on the Grassmannian spaces are also $\lambda_1$-maximizers in their K\"ahler classes. Moreover, as is shown in~\cite{BG}, so are symmetric K\"ahler-Einstein metrics on Hermitian symmetric spaces of compact type. These results motivate a further investigation of spectral geometry of K\"ahler-Einstein metrics of positive scalar curvature; see~\cite{CDS,T} for the general existence theory of such metrics.

Following the ideas in~\cite{Na96,SI} for other extremal eigenvalue problems, we introduce the notion of a $\lambda_k$-{\em extremal K\"ahler metric} under the deformations in its K\"ahler class. The class of such extremal metrics contains $\lambda_k$-maximizers in $\mathcal K_\Omega(M,J)$. We assume that the metrics under consideration are always smooth, and show that for a $\lambda_k$-extremal K\"ahler metric there exists a collection of non-trivial $\lambda_k$-eigenfunctions $f_1,\ldots,f_\ell$ such that
$$
\lambda_k^2 \Big(\sum_{i=1}^{\ell} f_i^2 \Big) - 2\lambda_k \Big(\sum_{i=1}^{\ell}\abs{\nabla f_i}^2\Big)  + \sum_{i=1}^{\ell}\abs{dd^c f_i}^2=0. 
$$
For the first eigenvalue the latter hypothesis is also sufficient for a metric to be $\lambda_1$-extremal. This statement shows that for a $\lambda_k$-extremal K\"ahler metric $g$ the eigenvalue $\lambda_k(g)$ is always multiple (Corollary~\ref{c:multiple}), and allows to produce examples of $\lambda_1$-extremal metrics as products.

We proceed with considering in more detail the case when $(M,J)$ is a complex Fano manifold with non-trivial holomorphic vector fields. Recall that, by a classical result of Matsushima~\cite{Ma57}, for a K\"ahler-Einstein metric $g$ on $(M,J)$ the $\lambda_1$-eigenfunctions are potentials of Killing vector fields. Using this fact, we show that a K\"ahler-Einstein metric $g$ is $\lambda_1$-extremal if and only if there exist non-trivial Killing potentials $f_1,\ldots,f_\ell$ such that the function $\sum f_i^2$ is also a Killing potential. As an application, we conclude that a toric Fano K\"ahler-Einstein manifold whose connected group of automorphisms is a torus is not $\lambda_1$-extremal. For example, so is a K\"ahler-Einstein metric on ${\mathbb C}P^2$ blown up at three points in a generic position, see~\cite{siu,T0}. As another example, we show that K\"ahler-Einstein manifolds different from ${\mathbb C}P^m$  that admit hamiltonian $2$-forms of order $\geqslant 1$ are also never $\lambda_1$-extremal. This conclusion applies to the non-homogeneous K\"ahler-Einstein metrics from \cite{Koi90,KS86,KS88}.


\section{An extremal eigenvalue problem}
\label{s:extremal-general}
\subsection{Statement of the problem}
Let $(M,g,J,\omega)$ be a compact K\"ahler manifold of real dimension $n=2m$. Recall that due to the $\partial\bar\partial$-lemma any K\"ahler metric $\tilde g$ whose K\"ahler form $\tilde\omega(\cdot,\cdot)=\tilde g(J\cdot,\cdot)$ is co-homologous to $\omega$ has the form $\omega+dd^c\varphi$, where $d^c=JdJ^{-1}=i(\partial-\bar\partial)$ and the action of $J$ on the cotangent bundle is defined via the duality with respect to $g$. The smooth function $\varphi$ above is determined uniquely by the condition
$$
\int_M\varphi v_g=0,$$
where  $v_g=\omega^m/m!$
is the Riemannian volume form on $M$.  By $\mathcal K_\Omega(M,J)$  we denote the space of K\"ahler metrics on $(M,J)$ whose K\"ahler forms represent a given de Rham cohomology class $\Omega$. For a representative K\"ahler form $\omega$ it can be identified with the space of functions
$$
\{\varphi\in C^\infty(M): \omega+dd^c\varphi>0,\int_M\varphi v_g=0\}.
$$
Besides, for any function $\varphi$ with a zero mean-value there exists $\varepsilon>0$ such that $\omega+tdd^c\varphi\in\mathcal K_\Omega(M,J)$ for all $\abs{t}<\varepsilon$, and the space $C^\infty_0(M)$, formed by such functions $\varphi$, can be thought as the tangent space at $g\in\mathcal K_\Omega(M,J)$.

For a K\"ahler metric $g$ on $(M,J)$ we denote by
$$
0=\lambda_0(g)<\lambda_1(g)\leqslant\lambda_2(g)\leqslant\ldots\leqslant\lambda_k(g)\leqslant\ldots
$$
the eigenvalues of the Laplace--Beltrami operator $\Delta_g=\delta d$ acting on functions, where $\delta$ is the $L_2$-adjoint of the exterior derivative $d$ with respect to $g$. Recall that due to the standard Kato--Rellich perturbation theory the functions $t\mapsto\lambda_k(g_t)$ have left and right derivatives for analytic deformations $g_t$. We view the eigenvalues $\lambda_k(g)$ as functionals on the space $\mathcal K_\Omega(M,J)$ and introduce the following definition.
\begin{defn}
\label{ex:defn}
A K\"ahler metric $g\in\mathcal K_\Omega(M,J)$ is called $\lambda_k$-{\it extremal}, if for any analytical deformation $g_t\in\mathcal K_\Omega(M,J)$ with $g_0=g$ the following relation holds
\begin{equation}
\label{extremal:def}
\left.\frac{d}{dt}\right|_{t=0-}\!\!\lambda_k(g_t)\cdot\left.\frac{d}{dt}\right|_{t=0+}\!\!\lambda_k(g_t)\leqslant 0.
\end{equation}
\end{defn}

It is straightforward to see that a K\"ahler metric is $\lambda_k$-extremal if and only if either the inequality
$$
\lambda_k(g_t)\leqslant\lambda_k(g)+o(t)\qquad\text{as }t\to 0,
$$
or the inequality
$$
\lambda_k(g_t)\geqslant\lambda_k(g)+o(t)\qquad\text{as }t\to 0
$$
occurs. In particular, we see that any $\lambda_k$-maximizer in $\mathcal K_\Omega(M,J)$ is $\lambda_k$-extremal. The following two remarks are consequences of the results in~\cite{SI}. First, for the first eigenvalue only the first of the above inequalities may occur. Second, for a deformation $\omega_t=\omega+dd^c\varphi_t$ the validity or the failure of relation~\eqref{extremal:def} depends only on the function $\dot\varphi=(d/dt)|_{t=0}\varphi_t$, and hence, in Definition~\ref{ex:defn} we may consider only deformations with $\varphi_t=t\varphi$, where $\varphi\in C^\infty_0(M)$.

In the sequel we use the fourth order differential operator $L(f)$ defined as $\delta^c\delta(fdd^cf)$, where $\delta$ and $\delta^c$ stand for the $L_2$-adjoints of $d$ and $d^c$ respectively. Recall that they satisfy the relations
$$
\delta \psi = -\sum_{i=1}^{2m} \imath_{e_i} (D_{e_i}\psi)\quad\text{and}\quad \delta^c \psi = -\sum_{i=1}^{2m} \imath_{Je_i} (D_{e_i}\psi),
$$
where $D$ is the Levi-Civita connection of $g$, and $\{e_1,Je_1,\ldots,e_m,Je_m\}$ is a $J$-adapted orthonormal frame. We then calculate
\begin{multline}
\label{L}
L(f)=\delta^c \Big(-(dd^c f) (df^{\sharp}, \cdot)+ f d^c \delta d f \Big)\\=\sum_{i=1}^{2m}\imath_{J_{e_i}}(D_{e_i}dd^cf )(df^{\sharp}, \cdot)-\sum_{i=1}^{2m} (\imath_{Je_i}(dd^cf) (D_{e_i} df{\sharp}), \cdot)+f \delta^cd^c \delta df - (df, d \Delta f) \\
=(\delta^c dd^cf, df)+(dd^cf, dd^cf)+ f \Delta_g^2 f-(\Delta df, df)  \\
=(dd^cf, dd^cf)  + f \Delta_g^2 f - 2(\Delta df, df), 
\end{multline}       
where we used the facts that $dd^cf$ is $J$-invariant and that  $J$ commutes with $D$ and $\Delta_g$ (when acting on $1$-forms), as well as the standard K\"ahler identities between the operators $d$, $d^c$, $\delta$, $\delta^c$, and $\Delta_g$.


The proof of the following statement is close in the spirit to the arguments in~\cite{Na96,SI} and is given at the end of the section.
\begin{thm} \label{l:extremal}
Let $g\in\mathcal K_\Omega(M,J)$ be a $\lambda_k$-extremal K\"ahler metric. Then there exists a finite collection $f_1,\ldots,f_\ell$ of non-trivial eigenfunctions corresponding to $\lambda_k(g)$ such that
\begin{equation}
\label{extremal}
\sum_{i=1}^{\ell}  L(f_i)=\lambda_k^2 \Big(\sum_{i=1}^{\ell} f_i^2 \Big) - 2\lambda_k \Big(\sum_{i=1}^{\ell}\abs{\nabla f_i}^2\Big)  + \sum_{i=1}^{\ell}\abs{dd^c f_i}^2=0.   
\end{equation}
For $k=1$ the existence of such a collection of eigenfunctions is sufficient for a K\"ahler metric $g$ to be $\lambda_1$-extremal.
\end{thm}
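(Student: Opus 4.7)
The plan is to adapt the Nadirashvili / El Soufi--Ilias scheme to the K\"ahler setting by reducing extremality to a convex condition on a finite-dimensional image of the eigenspace. Two ingredients drive the argument: a Hadamard-type formula identifying the slopes of analytic branches of $\lambda_k(g_t)$ with eigenvalues of a quadratic form on $E:=E_{\lambda_k(g)}$, and a Hahn--Banach separation carried out on that image.

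\textbf{Step 1 (variation formula).} For $\omega_t=\omega+t\,dd^c\varphi$ with $\varphi\in C^\infty_0(M)$, the standard Kato--Rellich theory produces real-analytic branches $\mu_1(t)\le\dots\le\mu_N(t)$ emanating from $\lambda_k(g)$ whose slopes $s_1\le\dots\le s_N$ at $t=0$ are the eigenvalues of a symmetric bilinear form $Q_\varphi$ on $E$. Computing $\dot\Delta_{g_t}|_{t=0}$ via the K\"ahler identities, accounting for the variation of $v_{g_t}$ and of the $L^2$-inner product, and integrating by parts, one obtains
$$Q_\varphi(f,f)=\int_M L(f)\,\varphi\,v_g,\qquad f\in E,$$
which on eigenfunctions equals $\int_M(\lambda_k^2 f^2-2\lambda_k\abs{\nabla f}^2+\abs{dd^cf}^2)\,\varphi\,v_g$ by \eqref{L}. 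Tracking how $\lambda_k(g_t)$ is selected from the branches shows that \eqref{extremal:def} forces $s_1\le 0\le s_N$, i.e.\ $Q_\varphi$ is not definite on $E$; for $k=1$ (where $\lambda_0<\lambda_1$) the right and left derivatives are precisely $s_1$ and $s_N$, so the converse also holds.

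\textbf{Step 2 (necessity).} The assignment $f\mapsto L(f)$ sends $E$ into a finite-dimensional subspace $W\subset C^\infty(M)$, whose elements all have zero mean because $L=\delta^c\delta(\,\cdot\,)$ is a divergence. The set $\Sigma=\{L(f):f\in E,\ \norm{f}=1\}$ is compact and so is its convex hull $K\subset W$. If $0\notin K$, finite-dimensional Hahn--Banach yields a linear functional strictly separating $0$ from $K$; since $W$ consists of zero-mean functions, this functional is represented by pairing with some $\varphi\in C^\infty_0(M)$, and the resulting inequality $\int_M L(f)\varphi\,v_g>0$ on $E\setminus\{0\}$ makes $Q_\varphi$ definite, contradicting Step 1. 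Hence $0\in K$: writing $0=\sum_i t_i L(f_i)$ with $t_i>0$ and $\norm{f_i}=1$ and using the homogeneity $L(cf)=c^2L(f)$ gives non-trivial $\tilde f_i=\sqrt{t_i}f_i$ with $\sum_i L(\tilde f_i)=0$.

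\textbf{Step 3 (sufficiency for $k=1$, and main obstacle).} Given $f_1,\dots,f_\ell\in E_{\lambda_1(g)}$ not all zero with $\sum_i L(f_i)=0$, the identity $\sum_i Q_\varphi(f_i,f_i)=\int_M\varphi\sum_i L(f_i)\,v_g=0$ for every $\varphi\in C^\infty_0(M)$ forces some $Q_\varphi(f_i,f_i)\le 0$, hence $s_1\le 0$; replacing $\varphi$ by $-\varphi$ (so $Q_\varphi\mapsto -Q_\varphi$) yields $s_N\ge 0$. By the $k=1$ clause of Step 1 this is exactly \eqref{extremal:def}. The technical core of the whole argument is Step 1: organising the perturbation computation so that the contributions from $\dot\Delta_{g_t}$, $\dot v_{g_t}$, and the varying inner product cancel and assemble into the divergence form $L(f)$ via \eqref{L}; once this formula is in hand the Hahn--Banach separation and the sufficiency step are essentially formal.
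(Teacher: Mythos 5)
Your proposal is correct and follows essentially the same route as the paper: the reduction via the El Soufi--Ilias variation formula to the (in)definiteness of $Q_\varphi(f)=\int_M\varphi L(f)\,v_g$ on $E_k$ (the paper's Lemma~\ref{l:condition}, where the key computation is $\dot\Delta_\varphi f=(dd^cf,dd^c\varphi)$ followed by integration by parts to reach $L=\delta^c\delta(f\,dd^cf)$), then Hahn--Banach separation of $0$ from the convex set $K=\{\sum_iL(f_i):\sum_i\int_Mf_i^2v_g=1\}$ (which is exactly your convex hull of $\Sigma$ by the homogeneity $L(cf)=c^2L(f)$), and the same formal argument for sufficiency when $k=1$. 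The only part left schematic in your write-up is the verification of the variation formula itself, which the paper carries out via a Schur-lemma identification of the $U(m)$-invariant form $q$ with the metric on $\wedge^{1,1}(M)$; the formula you state is the correct one.
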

Denote by $E_k=E_k(g)$ the eigenspace formed by eigenfunctions corresponding to the eigenvalue $\lambda_k(g)$.
The operator $L$ restricted to $E_k$ takes the form
$$
L(f)=\lambda_k^2f^2-2\lambda_k\abs{\nabla f}^2+\abs{dd^cf}^2.
$$
Considering maximal and minimal values of a function $f\in E_k$, it is straightforward to see that it has a trivial kernel. Thus, we obtain the following corollary.
\begin{cor}\label{c:multiple}
Let $g\in\mathcal K_\Omega(M,J)$ be a $\lambda_k$-extremal K\"ahler metric. Then the eigenvalue $\lambda_k(g)$ is multiple.
\end{cor}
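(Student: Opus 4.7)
The plan is to argue by contradiction: assume $\lambda_k(g)$ is simple, so $E_k$ is one-dimensional, and derive a contradiction from the relation $\sum_{i=1}^\ell L(f_i)=0$ supplied by Theorem~\ref{l:extremal}. The expression
$$
L(f)=\lambda_k^2 f^2-2\lambda_k\abs{\nabla f}^2+\abs{dd^c f}^2
$$
is manifestly quadratic in $f$, so $L(cf)=c^2 L(f)$ for any constant $c$. Hence if $E_k=\vspan\{f\}$ for a single eigenfunction $f$, then writing each of the $f_i$ from Theorem~\ref{l:extremal} as $f_i=c_i f$ with $c_i\neq 0$ (since the $f_i$ are non-trivial), the sum collapses to
$$
\sum_{i=1}^\ell L(f_i)=\Bigl(\sum_{i=1}^\ell c_i^2\Bigr)L(f)=0,
$$
and the strict positivity of $\sum c_i^2$ forces $L(f)\equiv 0$.

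The second step is to rule this out pointwise. The function $f$ is continuous on the compact manifold $M$, so it attains a global maximum and a global minimum; since $f$ is non-trivial and has mean value zero (being an eigenfunction for $\lambda_k>0$), at least one of these extrema, call it $p$, satisfies $f(p)\neq 0$. At such a critical point $\nabla f(p)=0$, so the formula for $L(f)$ gives
$$
L(f)(p)=\lambda_k^2 f(p)^2+\abs{dd^c f(p)}^2\geqslant \lambda_k^2 f(p)^2>0,
$$
contradicting $L(f)\equiv 0$.

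This is exactly the hint provided after Theorem~\ref{l:extremal}: the restriction of $L$ to $E_k$ has trivial ``kernel'' in the sense that $L(f)=0$ implies $f=0$. The only point one needs to be careful about is the quadratic nature of $L$ on $E_k$, which allows the sum over $i$ to be pulled apart; once that is noted, the maximum principle argument at an extremum finishes the proof. No hard analytic input is required, making this a direct corollary of Theorem~\ref{l:extremal}.
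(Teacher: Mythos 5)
Your proof is correct and follows essentially the same route as the paper: the paper's entire argument is the one-line remark that, by considering the maximal and minimal values of $f\in E_k$, the quadratic expression $L$ restricted to $E_k$ has trivial kernel, which is precisely the critical-point computation you carry out. Your explicit observation that $L$ is quadratic, so that the sum supplied by Theorem~\ref{l:extremal} collapses to a positive multiple of $L(f)$ when $E_k$ is one-dimensional, is a detail the paper leaves implicit but which is needed and which you handle correctly.
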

As another consequence of Theorem~\ref{l:extremal}, we see that the notion of $\lambda_1$-extremality behaves well under products.
\begin{cor}
\label{c:product}
Let $g\in\mathcal K_{\Omega}(M)$ be a $\lambda_1$-extremal K\"ahler metric on $(M,J)$. Then for any K\"ahler metric $g'$ on $(M',J')$ such that $\lambda_1(g')\geqslant\lambda_1(g)$ the product metric $g\times g'$ is $\lambda_1$-extremal along deformations in its K\"ahler class on $(M,J)\times (M',J')$.
\end{cor}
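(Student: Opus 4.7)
The plan is to apply the sufficient part of Theorem~\ref{l:extremal} to the product. Since $g$ is $\lambda_1$-extremal, there is a collection $f_1,\dots,f_\ell$ of $\lambda_1(g)$-eigenfunctions on $(M,J,g)$ satisfying the identity~\eqref{extremal}. The strategy is to lift these functions to $M\times M'$ via the projection $\pi: M\times M'\to M$, verify that the lifts $\tilde f_i=\pi^{\ast}f_i$ are $\lambda_1$-eigenfunctions of the product metric, and check that the extremal identity is preserved under the lift.

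First I would identify the first eigenvalue of the product. Since $\Delta_{g\times g'}=\Delta_g\otimes 1+1\otimes\Delta_{g'}$, the spectrum is $\{\lambda_i(g)+\lambda_j(g')\}$, and the hypothesis $\lambda_1(g')\geqslant\lambda_1(g)$ forces
$$
\lambda_1(g\times g')=\lambda_1(g),
$$
with the eigenspace containing $\pi^{\ast}E_1(g)$. In particular each $\tilde f_i=\pi^{\ast}f_i$ is a $\lambda_1$-eigenfunction of $g\times g'$.

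Next I would verify that each term in identity~\eqref{extremal} pulls back correctly. Under the product complex structure $J\oplus J'$ and the product Kähler form $\omega\oplus\omega'$, $\pi$ is a holomorphic Riemannian submersion with totally geodesic fibers, so on functions depending only on the first factor one has $d\tilde f_i=\pi^{\ast}df_i$, $d^c\tilde f_i=\pi^{\ast}d^c f_i$, $dd^c\tilde f_i=\pi^{\ast}dd^c f_i$, and the pointwise norms satisfy
$$
\abs{\nabla\tilde f_i}_{g\times g'}^2=\pi^{\ast}\abs{\nabla f_i}_g^2,\qquad \abs{dd^c\tilde f_i}_{g\times g'}^2=\pi^{\ast}\abs{dd^c f_i}_g^2,
$$
while trivially $\tilde f_i^{\,2}=\pi^{\ast}(f_i^2)$. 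Combining these with $\lambda_1(g\times g')=\lambda_1(g)$, the expression $\lambda_1^2\sum\tilde f_i^{\,2}-2\lambda_1\sum\abs{\nabla\tilde f_i}^2+\sum\abs{dd^c\tilde f_i}^2$ equals the pullback via $\pi$ of the corresponding expression for the $f_i$ on $(M,g)$, which vanishes by hypothesis. The sufficient direction of Theorem~\ref{l:extremal} then yields that $g\times g'$ is $\lambda_1$-extremal along deformations in its Kähler class.

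There is no real obstacle here: the argument reduces to the compatibility of $d^c$, the gradient, and the complex Hessian with the product Kähler structure, together with the elementary observation that the hypothesis $\lambda_1(g')\geqslant\lambda_1(g)$ places $\lambda_1(g)$ at the bottom of the product spectrum. The one point deserving care is to insist that the same collection $\{\tilde f_i\}$ satisfies the full identity~\eqref{extremal}, rather than being merely an eigenbasis, which is why it is essential that the identity on $(M,g)$ pulls back pointwise and that no eigenfunctions coming from the second factor need to be included.
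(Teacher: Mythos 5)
Your argument is correct and is precisely the intended one: the paper states Corollary~\ref{c:product} as a direct consequence of the sufficiency part of Theorem~\ref{l:extremal} without writing out details, and your proof supplies exactly those details (the hypothesis $\lambda_1(g')\geqslant\lambda_1(g)$ puts $\lambda_1(g)$ at the bottom of the product spectrum, and the identity~\eqref{extremal} is pointwise and pulls back under the holomorphic projection). No gaps.
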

Mention that the hypothesis $\lambda_1(g')\geqslant\lambda_1(g)$ in the corollary above always holds after an appropriate scaling of either of the metrics.

\subsection{Discussion and basic examples}
We proceed with considering the case when $m=1$, that is when $M$ is an oriented Riemann surface. Let $g$ be a Riemannian metric and $\omega=v_g$ be its volume form. It is straightforward to see that for any smooth function $\varphi\in C^\infty(M)$ the hypothesis $\omega+dd^c\varphi>0$ holds if and only if $1-\Delta_g\varphi>0$. Thus, the space $\mathcal K_{[\omega]}(M,J)$ can be defined as
$$
\mathcal K_{[\omega]}(M,J)=\{\varphi\in C^\infty(M):1-\Delta_g\varphi>0,\int_M\varphi v_g=0\}.
$$
The following lemma is elementary; we state it for the convenience of references.
\begin{lemma}
Let $(M,g)$ be a Riemannian surface, and $\omega$ be its volume form. Then the space of K\"ahler metrics  in $\mathcal K_{[\omega]}(M,J)$ coincides with the space of Riemannian metrics $\tilde g$ that are conformal to $g$ and such that ${\rm vol}(M,{\tilde g}) ={\rm vol}(M,g)$.
\end{lemma}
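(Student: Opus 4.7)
The plan is to exploit two special features of real dimension two. First, the K\"ahler condition $d\tilde\omega=0$ is automatic for any 2-form on a surface, so a ``K\"ahler metric compatible with $J$'' is just a Riemannian metric compatible with $J$. Second, at each point the space of $J$-invariant positive-definite inner products on a real 2-dimensional vector space is one-dimensional (a single ray), because $J$ acts as a rotation by $90^\circ$ and is thus determined up to positive scale; consequently, any two $J$-compatible Riemannian metrics on $(M,J)$ are pointwise proportional, i.e.\ conformal.

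For the forward inclusion, let $\tilde g\in\mathcal K_{[\omega]}(M,J)$. By the second observation, $\tilde g=e^{2u}g$ for some $u\in C^\infty(M)$, so its K\"ahler form is $\tilde\omega=e^{2u}\omega$. Since $M$ is a closed oriented surface, $H^2(M,\R)\cong\R$ via integration, and the cohomological condition $[\tilde\omega]=[\omega]$ is equivalent to $\int_M\tilde\omega=\int_M\omega$, which says ${\rm vol}(M,\tilde g)={\rm vol}(M,g)$.

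Conversely, let $\tilde g=e^{2u}g$ be conformal to $g$ with ${\rm vol}(M,\tilde g)={\rm vol}(M,g)$. Then $\tilde g$ is $J$-invariant because $g$ is, and $\tilde\omega=e^{2u}\omega$ is a top-degree form on a surface, hence automatically closed. The volume hypothesis gives $\int_M(\tilde\omega-\omega)=0$, so $[\tilde\omega-\omega]=0$ in $H^2(M,\R)$, placing $\tilde g$ in $\mathcal K_{[\omega]}(M,J)$.

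There is no real obstacle here; the argument is bookkeeping once one notes the 1-dimensionality of the space of $J$-compatible inner products in real dimension two. As a sanity check against the potential parametrization recalled just before the lemma, one has $dd^c\varphi=-(\Delta_g\varphi)\,\omega$ on a Riemann surface (up to a universal sign), so writing $\tilde\omega=\omega+dd^c\varphi$ recovers $e^{2u}=1-\Delta_g\varphi$, while the zero-mean normalization $\int_M\varphi\,v_g=0$ pins down a unique such potential for each $\tilde g$.
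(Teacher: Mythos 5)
Your proof is correct, but it runs along a slightly different track than the paper's. The paper works entirely through the potential parametrization: it takes $\varphi$ with $1-\Delta_g\varphi>0$ and $\int_M\varphi\,v_g=0$, observes that $\tilde g=(1-\Delta_g\varphi)g$ has the same volume, and for the converse solves the Poisson equation $\Delta_g\varphi=1-e^{\sigma}$ (citing standard elliptic theory), with the equal-volume hypothesis serving as the solvability condition $\int_M(1-e^\sigma)v_g=0$. You instead argue directly from the cohomological definition of $\mathcal K_{[\omega]}(M,J)$: the pointwise linear-algebra fact that a $J$-invariant inner product on a real $2$-plane is unique up to positive scale shows every element of $\mathcal K_{[\omega]}(M,J)$ is conformal to $g$, closedness of the K\"ahler form is automatic in top degree, and the identification $H^2(M;\R)\cong\R$ by integration converts ``cohomologous'' into ``equal volume.'' Your route makes explicit a point the paper leaves implicit (why an arbitrary $J$-compatible metric is conformal to $g$) and replaces the elliptic existence theorem by the de Rham/integration isomorphism; the paper's route has the advantage of producing the potential $\varphi$ explicitly, which is what the rest of the paper actually uses. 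Your closing ``sanity check'' recovers exactly the paper's equation $e^{\sigma}=1-\Delta_g\varphi$, so the two arguments meet in the end; just note that the uniqueness and smoothness of that zero-mean potential is where the elliptic theory (Gilbarg--Trudinger in the paper) re-enters if one insists on the potential description.
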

\begin{proof}
Let $\varphi$ be a function from the above space $\mathcal K_{[\omega]}(M,J)$. Then the metric $\tilde g=(1-\Delta_g\varphi)g$ is clearly has the same volume as the metric $g$. Conversely, for a given conformal metric $\tilde g=e^{\sigma}g$ of the same volume as $g$ the equation
$$
e^{\sigma}=1-\Delta_g\varphi
$$
has a unique solution $\varphi\in C^\infty(M)$ with zero mean-value, see~\cite{GT} for standard existence results for solutions of elliptic equations.
\end{proof}
As a consequence of the lemma above, we see that a Riemannian metric $g$ on $M$ is $\lambda_k$-extremal in the sense of Definition~\ref{extremal:def} if and only if it is $\lambda_k$-extremal under volume preserving conformal deformations.  There have been constructed a number of examples of various $\lambda_1$-extremal and $\lambda_1$-maximal metrics in the literature, and as is known~\cite{Na10,Pet}, every conformal class on a closed surface contains a $\lambda_1$-maximizer, which may have conical singularities. Thus, using Corollary~\ref{c:product}, we obtain a variety of examples of $\lambda_1$-extremal K\"ahler metrics by taking the products of $\lambda_1$-extremal Riemannian surfaces and K\"ahler manifolds.

Recall that by~\cite{El03,SI} for a metric $g$ that is $\lambda_k$-extremal under the volume preserving conformal deformations there exists a collection $f_1,\ldots,f_\ell$ of non-trivial $\lambda_k$-eigenfunctions such that $\sum f_i^2=1$. When $m=1$, we see that the latter condition coincides  with the necessary condition given by Theorem~\ref{l:extremal}. Indeed, in this case, for a $\lambda_k$-eigenfuction $f$ the operator $L(f)$ takes the form
$$
L(f)=2(\lambda_k^2f^2-\lambda_k\abs{\nabla f}^2),
$$
where we used the identities $(\Delta_g f)\omega=-dd^cf$ and $\abs{\omega}^2=1$. Now the relation
$$
\Delta_g(\sum f_i^2)=2(\lambda_k \sum f_i^2-\sum\abs{\nabla f_i}^2)=\lambda_k^{-1}\sum L(f_i)
$$
implies the claim. More generally, in higher dimensions $m\geqslant 2$ the hypothesis $\sum L(f_i)=0$ in Theorem~\ref{l:extremal} is equivalent to the relation
$$
\Delta_g(\sum f_i^2)=\lambda_k^{-1}m(m-1)(\sum dd^cf_i\wedge dd^cf_i)\wedge\omega^{m-2}/\omega^m.
$$
Here we used the fact that $(\Delta_g f)^2-\abs{dd^cf}^2=m(m-1)dd^cf\wedge dd^cf\wedge\omega^{m-2}/\omega^m$, see identity~\eqref{quadratic} in the proof of Theorem~\ref{l:extremal} below.

\subsection{Proof of Theorem~\ref{l:extremal}}
Below we use the conventions and basic identities from \cite[Ch.~2]{besse}; in particular,  $|\omega|^2=m$ and $m\cdot dd^c f \wedge \omega^{m-1}= (-\Delta_g f)\omega^m$. We start with the following lemma.
\begin{lemma}
\label{l:condition}
Let $g\in\mathcal K_\Omega(M,J)$ be a $\lambda_k$-extremal K\"ahler metric, and $E_k$ be an eigenspace for $\lambda_k(g)$. Then for any function $\varphi\in C^\infty_0(M)$ the quadratic form
$$
Q_\varphi(f)=\int_M\varphi L(f)v_g
$$
is indefinite on $E_k$. For $k=1$ the hypothesis that the form $Q_\varphi$ is indefinite for any $\varphi\in C^\infty_0(M)$ is also sufficient for a K\"ahler metric $g$ to be $\lambda_k$-extremal.
\end{lemma}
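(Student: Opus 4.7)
The plan is to translate the sign condition~\eqref{extremal:def} on one-sided derivatives of $\lambda_k(g_t)$ into a spectral condition on $E_k$ via the Kato--Rellich analytic perturbation theory. By the remarks following Definition~\ref{ex:defn} it suffices to consider linear deformations $\omega_t=\omega+t\,dd^c\varphi$ for $\varphi\in C^\infty_0(M)$, so the family $g_t$ is real-analytic in $t$. Writing $d=\dim E_k$, the whole cluster of eigenvalues at $\lambda_k(g)$ splits into analytic branches $\lambda_k(g)+t\mu_j(\varphi)+o(t)$, $j=1,\ldots,d$, where $\mu_1(\varphi)\leqslant\ldots\leqslant\mu_d(\varphi)$ are the eigenvalues of the symmetric bilinear form on $E_k$ obtained as the first variation of the Rayleigh quotient. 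Ordering the perturbed eigenvalues by value shows that the right derivative of $\lambda_k(g_t)$ at $t=0$ equals $\mu_1(\varphi)$ and the left derivative equals $\mu_d(\varphi)$ whenever $k$ is at the bottom of its cluster (in particular whenever $k=1$); for positions higher up in a cluster they equal intermediate eigenvalues of the form.

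The core of the proof is to identify this first-variation form with $Q_\varphi$. I would use the K\"ahler formula $\int_M|df|^2_{g_t}\,dv_{g_t}=\int_M df\wedge d^cf\wedge\omega_t^{m-1}/(m-1)!$ (valid because $d^c$ depends only on $J$) together with $dv_{g_t}=(1-t\Delta_g\varphi)v_g+O(t^2)$, and differentiate the Rayleigh quotient at $t=0$ to arrive at
$$\mathring{Q}_\varphi(f)=\int_M df\wedge d^cf\wedge dd^c\varphi\wedge\omega^{m-2}/(m-2)!+\lambda_k\int_M f^2\,\Delta_g\varphi\,v_g$$
for $f\in E_k$ with $\int_M f^2\,v_g=1$. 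Two successive integrations by parts (using $d\omega=0$ and $dd^c+d^cd=0$) rewrite the first integral as $-\int_M\varphi\,dd^cf\wedge dd^cf\wedge\omega^{m-2}/(m-2)!$, and the quadratic identity $(\Delta f)^2-|dd^cf|^2=m(m-1)(dd^cf\wedge dd^cf\wedge\omega^{m-2})/\omega^m$ recalled in the preceding discussion converts this to $-\int_M\varphi[(\Delta f)^2-|dd^cf|^2]v_g$. The second term, via $\Delta_g(f^2)=2(f\Delta f-|\nabla f|^2)$ and one integration by parts, becomes $2\lambda_k\int_M\varphi(\lambda_k f^2-|\nabla f|^2)v_g$. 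Using $\Delta f=\lambda_k f$ and adding the two pieces produces exactly $\int_M\varphi\,L(f)v_g=Q_\varphi(f)$.

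Once the identification $\mathring{Q}_\varphi=Q_\varphi$ is established, the necessity follows at once: the extremality relation forces the product of one of the $\mu_j(\varphi)$'s with another one to be $\leqslant 0$, whence $\mu_1(\varphi)\leqslant 0\leqslant\mu_d(\varphi)$, which is precisely the indefiniteness of $Q_\varphi$ on $E_k$. For $k=1$ the converse holds by the same chain of equivalences read backwards: as noted in the first paragraph, the one-sided derivatives of $\lambda_1(g_t)$ are exactly $\mu_1(\varphi)$ and $\mu_d(\varphi)$, so the indefiniteness of $Q_\varphi$ for every $\varphi\in C^\infty_0(M)$ immediately yields the sign condition~\eqref{extremal:def}. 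The main obstacle I anticipate is the careful bookkeeping in the two integrations by parts of the middle paragraph, where the K\"ahler identities between $d$, $d^c$, and $\omega$ must be combined to produce the precise combination defining the operator $L$ of~\eqref{L}.
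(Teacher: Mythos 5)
Your proposal is correct and follows essentially the same route as the paper: the paper likewise reduces to linear deformations $\omega_t=\omega+t\,dd^c\varphi$, identifies $Q_\varphi$ with the first-variation form of the Laplacian on $E_k$ (quoting El Soufi--Ilias, Theorem~2.1, for the perturbation-theoretic characterization of extremality that you sketch from scratch), and relies on the same algebraic identity~\eqref{quadratic}. The only cosmetic difference is that the paper first proves the pointwise formula $\dot\Delta_\varphi f=(dd^cf,dd^c\varphi)$ via a Schur-lemma argument on $\wedge^{1,1}(M)$ and then performs a single integration by parts against $\delta^c\delta(f\,dd^cf)$, whereas you differentiate the Rayleigh quotient and integrate by parts twice to reach the same expression.
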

\begin{proof}
For any $\varphi\in C^\infty_0(M)$ consider the K\"ahler deformation $\omega_t=\omega+tdd^c\varphi$, defined for a sufficiently small $\abs{t}$. By~\cite[Theorem~2.1]{SI} for a proof of the lemma it is sufficient to show that the form $Q_\varphi$ satisfies the relation
\begin{equation}
\label{aux1}
Q_\varphi(f)=\int_M f(\dot\Delta_\varphi f)v_g,
\end{equation}
where $\dot\Delta_\varphi f$ stands for the value $(d/dt)|_{t=0}(\Delta_{g_t}f)$. First, we claim that the operator $\dot\Delta_\varphi$ satisfies the identity
\begin{equation}
\label{aux2}
\dot\Delta_\varphi f=(dd^c f, dd^c\varphi).
\end{equation}
Indeed, differentiating the relation
$$
m\cdot dd^c f  \wedge \omega_t^{m-1}=(-\Delta_{g_t} f)\omega_t^m,
$$
we obtain
\begin{equation}
\label{variation}
dd^c f \wedge dd^c\varphi \wedge \omega^{m-2}/(m-2)! = \Big((\Delta_{g} f)(\Delta_{g}\varphi)-\dot\Delta_\varphi f\Big) \omega^m /m!.
\end{equation}
Denote by $\wedge^{1,1}(M)$ the bundle of real $(1,1)$-forms on the manifold $(M,J)$, that is $2$-forms $\psi$ such that $\psi(J\cdot, J\cdot) = \psi(\cdot, \cdot)$. For a given K\"ahler metric $(g, \omega)$ on $(M,J)$, it decomposes as a direct $g$-orthogonal sum
\begin{equation}
\label{U-split}
\wedge^{1,1} (M) = \R \omega \oplus \wedge^{1,1}_0(M)
\end{equation}
of the irreducible $U(m)$-invariant subspaces of $(1,1)$-forms  proportional to $\omega$ and primitive (trace-free) $(1,1)$-forms, respectively.  Let us consider the symmetric $U(m)$-invariant bilinear form on $\wedge^{1,1}(M)$ defined as
$$q(\phi, \psi)=  ({\rm tr}_{\omega}\phi) ({\rm tr}_{\omega} \psi) -  \frac{\phi \wedge \psi \wedge (\omega^{m-2}/(m-2)!)}{\omega^m/m!},$$ 
where 
$${\rm tr}_{\omega} \psi = (\psi, \omega)_g = \frac{\psi\wedge (\omega^{m-1}/(m-1)!)}{\omega^m/m!}.$$
Since the form $q(\cdot,\cdot)$ leaves the two irreducible factors in the decomposition~\eqref{U-split} orthogonal, by the Sch\"ur lemma we conclude that it is proportional to the induced Euclidean product $(\cdot, \cdot)_g$ on each factor in~\eqref{U-split}. Evaluating $q(\omega, \omega)$ and $q(\psi_0, \psi_0)$ with $\psi = \alpha \wedge J\alpha$ for a unitary $1$-form $\alpha$, we see  that in fact the form $q(\cdot, \cdot)$ coincides with $(\cdot, \cdot)_g$, that is
\begin{equation}
\label{quadratic}
({\rm tr}_{\omega}\phi) ({\rm tr}_{\omega} \psi) -  \frac{\phi \wedge \psi \wedge (\omega^{m-2}/(m-2)!)}{\omega^m/m!}=(\phi,\psi)_g.
\end{equation}
Now combining the last relation with~\eqref{variation}, we obtain identity~\eqref{aux2}. Using the latter we have
$$
Q_\varphi(f)=\int_M\varphi L(f)v_g=\int_M f(dd^cf,dd^c\varphi)v_g=\int_M f(\dot\Delta_\varphi f)v_g,
$$
and thus, obtain relation~\eqref{aux1}.
\end{proof}
By Lemma~\ref{l:condition}, in order to prove  the theorem it is sufficient to show that the quadratic form $Q_\varphi(f)$ is indefinite on $E_k$ if and only if there exists a collection of eigenfunctions $f_1,\ldots, f_\ell\in E_k$ such that
\begin{equation}
\label{extremal'}
\sum_{i=1}^\ell L(f_i)=0\qquad\text{and}\qquad\sum_{i=1}^\ell\int_Mf_i^2v_g=1.
\end{equation}
Consider the convex subset
$$
K=\left\{\sum_{i}L(f_i): f_i\in E_k, \sum_{i}\int_Mf_i^2v_g=1\right\}
$$
in the space $L^2(M)$. We are going to show that the form $Q_\varphi(f)$ is indefinite if and only if $0\in K$. Suppose that $Q_\varphi(f)$ is indefinite for any $\varphi\in C^\infty_0(M)$ and $0\notin K$. Then by the Hahn--Banach separation theorem there exists a function $\psi\in L^2(M)$ and $\varepsilon>0$ such that
$$
\int_M\psi uv_g\geqslant\varepsilon>0\quad\text{ for any }u\in K.
$$
Since the set $K$ lies in a finite-dimensional subspace, then choosing $\varepsilon>0$ smaller, if necessary, by approximation we may assume that the function $\psi$ belongs to $C^\infty(M)$. Define $\psi_0$ as the zero mean-value part of $\psi$, that is
$$
\psi_0=\psi-\frac{1}{{\rm vol}(M,g)}\int_M\psi v_g.
$$
Since the operator $L(f)$ takes values among zero mean-value functions, we obtain
$$
Q_{\psi_0}(f)=\int_M\psi_0L(f)v_g=\int_M\psi L(f)v_g>0
$$
for any non-trivial $f\in E_k$. Thus, we arrive at a contradiction with the assumption that the form $Q_{\varphi}$ is indefinite for any $\varphi\in C^\infty_0(M)$.

Conversely, given a collection $f_1,\ldots,f_\ell\in E_k$ that satisfy relationships~\eqref{extremal'}, we have
$$
\sum_{i=1}^\ell Q_\varphi(f_i)=\int_M\varphi\left(\sum_{i=1}^\ell L(f_i)\right)v_g=0.
$$
Thus, the quadratic form $Q_\varphi(f)$ is indeed indefinite for any $\varphi\in C^\infty_0(M)$. \qed

\section{K\"ahler--Einstein manifolds with a non-trivial automorphism group}\label{s:KE}
We now specialize the considerations to the case when $(g, J, \omega)$ is a K\"ahler--Einstein manifold with positive scalar curvature, that is the Ricci form $\rho$ is a positive constant multiple of the K\"ahler form $\omega$. Rescaling the metric, we may assume that $\rho=\omega$, or equivalently,  the K\"ahler class is $\Omega= 2\pi c_1(M,J)$. Under this assumption, the scalar curvature ${\rm Scal}_g$ equals $2m$.

Since the manifold $(M,J)$ is Fano, by~\cite{Kob61,yau} it is simply-connected. In particular, the first de Rham cohomology group vanishes, and hence, any real holomorphic vector field $X$ can be uniquely written in the form
$$ 
X = {\rm grad}_g h_X + J {\rm grad}_g f_X, 
$$
where $h_X$ and $f_X$ are smooth functions with zero mean-values, see~\cite[Cor.~2.126]{besse} for details. The complex-valued function $h_X+if_X$ is called the {\it holomorphy potential} of $X$. For a Killing vector field $Y$, the above decomposition reduces to
$$Y = J {\rm grad}_g f_Y,$$
and the corresponding function $f_Y\in C_{0}^{\infty}(M)$ is called the {\it Killing potential} of $Y$. The following statement is due to  Matsushima~\cite{Ma57}, see also \cite[Ch.~3]{gauduchon-book}.
\begin{prop}
\label{thm:matsushima} 
Let $(M, g, J, \omega)$ be a compact K\"ahler--Einstein manifold with scalar curvature ${\rm Scal}_g= 2m$.  Then the Lie algebra $\mathfrak{h}(M)$ of real holomorphic vector fields on $(M,J)$ decomposes as the direct sum
$$\mathfrak{h}(M) = \mathfrak{k}(M,g) \oplus J \mathfrak{k}(M,g),$$
where $\mathfrak{k}(M,g)$ is the sub-algebra of Killing vector fields for $g$. Moreover, the algebra $\mathfrak{k}(M,g)$ is Lie algebra isomorphic to the space
$$E_{1}(g)=\{f \in C_{0}^{\infty}(M) : \Delta_g (f)=2 f\}$$
equipped with the Poisson bracket of functions with respect to $\omega$, via the map $f\to J {\rm grad}_g f$.
Furthermore, the first eigenvalue satisfies the inequality $\lambda_1(g)\geqslant 2$.
\end{prop}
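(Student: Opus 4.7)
The plan is to derive all three assertions of the proposition from a single Matsushima-type lemma --- on a K\"ahler-Einstein manifold with $\rho = \omega$, the Hamiltonian field $X_f := J\mathrm{grad}_g f$ is Killing if and only if $\Delta_g f = 2f$ --- combined with the classical symplectic bracket identity and the K\"ahler refinement of Lichnerowicz's eigenvalue estimate.

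\emph{Core lemma.} Using $DJ = 0$ a direct computation gives
\begin{equation*}
(\mathcal{L}_{X_f} g)(Y, Z) \;=\; -\mathrm{Hess}_g(f)(Y, JZ) \;-\; \mathrm{Hess}_g(f)(JY, Z),
\end{equation*}
so $X_f$ is Killing iff $\mathrm{Hess}_g(f)$ is $J$-invariant, i.e., iff its $J$-anti-invariant part $H^{-}(f)$ vanishes. On any compact K\"ahler manifold one has the integrated identity
\begin{equation*}
\int_M |H^{-}(f)|^2\, v_g \;=\; \tfrac{1}{2}\int_M (\Delta_g f)^2\, v_g \;-\; \int_M \mathrm{Ric}_g(\mathrm{grad}_g f, \mathrm{grad}_g f)\, v_g,
\end{equation*}
obtained by combining the Riemannian Weitzenb\"ock formula for $D^2 f$ with the pointwise relation $|dd^c f|^2 = 2|H^{+}(f)|^2$ and the K\"ahler identity $\int_M |dd^c f|^2 v_g = \int_M (\Delta_g f)^2 v_g$ (itself a consequence of the K\"ahler identities between $d$, $d^c$, $\delta$, $\delta^c$). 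Substituting $\mathrm{Ric}_g = g$ and $\Delta_g f = \lambda f$ collapses the right-hand side to $\tfrac{1}{2}\lambda(\lambda - 2)\int_M f^2 v_g$; non-negativity of the left-hand side forces $\lambda \geqslant 2$ (the eigenvalue bound), and equality $\lambda = 2$ holds exactly when $H^{-}(f) = 0$, i.e., when $X_f$ is Killing. The general case of the lemma follows by expanding $f$ in the eigenbasis of $\Delta_g$ and using that every eigenvalue satisfies $\lambda_k \geqslant 2$.

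\emph{Decomposition and Lie-algebra isomorphism.} Given $X \in \mathfrak{h}(M)$ with Besse decomposition $X = \mathrm{grad}_g h_X + J\mathrm{grad}_g f_X$, applying the core lemma to $f_X$ equates holomorphy of $J\mathrm{grad}_g f_X$ with $f_X \in E_1(g)$. Since the full $X$ is holomorphic, the difference $\mathrm{grad}_g h_X$ is as well; applying the core lemma to $JX$ (whose Besse potentials are $-f_X$ and $h_X$ in swapped roles) yields $h_X \in E_1(g)$. This produces the direct sum $\mathfrak{h}(M) = \mathfrak{k}(M, g) \oplus J\mathfrak{k}(M, g)$ with $\mathfrak{k}(M, g) = \{J\mathrm{grad}_g f : f \in E_1(g)\}$, and triviality of the intersection follows from the zero-mean normalization of the Besse potentials. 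The map $\Phi(f) := J\mathrm{grad}_g f$ is then a linear bijection $E_1(g) \to \mathfrak{k}(M, g)$, and bracket-compatibility $[\Phi(f), \Phi(h)] = \Phi(\{f, h\}_\omega)$ is the classical symplectic identity $[X_f, X_h] = X_{\{f, h\}_\omega}$ for Hamiltonian fields on $(M, \omega)$.

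\emph{Main obstacle.} The delicate technical point is establishing the K\"ahler identity $\int_M |dd^c f|^2 v_g = \int_M (\Delta_g f)^2 v_g$ with the correct normalization, so that under $\mathrm{Ric}_g = g$ the critical eigenvalue in the Bochner estimate comes out precisely $2$ rather than the weaker Riemannian Lichnerowicz value $\tfrac{2m}{2m-1}$. This K\"ahler improvement uses the $U(m)$-invariant splitting $\mathrm{Hess}_g(f) = H^{+}(f) + H^{-}(f)$ and the pointwise relation $|H^{+}(f)|^2 = \tfrac{1}{2}|dd^c f|^2$ --- a purely K\"ahler phenomenon with no Riemannian analogue. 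Once these identities are in hand, all three assertions of the proposition follow uniformly from the core lemma together with the standard Hamiltonian-mechanics identities.
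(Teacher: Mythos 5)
The paper does not actually prove this proposition --- it is quoted as a known result from Matsushima~\cite{Ma57} (see also~\cite{gauduchon-book}) --- so your argument has to stand on its own. The Bochner part of it does: the integrated identity
$\int_M|H^-(f)|^2v_g=\tfrac12\int_M(\Delta_gf)^2v_g-\int_M\mathrm{Ric}_g(\mathrm{grad}_gf,\mathrm{grad}_gf)\,v_g$,
obtained from the Bochner formula for $df$ together with $\int_M|dd^cf|^2v_g=\int_M(\Delta_gf)^2v_g$ and $|dd^cf|^2=2|H^+(f)|^2$, is correct and gives $\lambda\geqslant 2$ for every nonzero eigenvalue, with equality characterized by $H^-(f)=0$, i.e.\ by $J\mathrm{grad}_gf$ being Killing. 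This yields $\lambda_1(g)\geqslant 2$, and, combined with the (unstated but easy) fact that a Killing field has vanishing gradient part in the Besse decomposition (its covariant derivative is skew-symmetric, forcing $\Delta_gh_Y=0$ and hence $h_Y=0$), it gives the isomorphism $E_1(g)\cong\mathfrak{k}(M,g)$ via $f\mapsto J\mathrm{grad}_gf$ together with the Poisson-bracket compatibility.

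The gap is in the decomposition $\mathfrak{h}(M)=\mathfrak{k}(M,g)\oplus J\mathfrak{k}(M,g)$, which is the actual content of Matsushima's theorem. You write that applying the core lemma to $f_X$ ``equates holomorphy of $J\mathrm{grad}_gf_X$ with $f_X\in E_1(g)$,'' and then that ``since the full $X$ is holomorphic, the difference $\mathrm{grad}_gh_X$ is as well'' --- but this presupposes that $J\mathrm{grad}_gf_X$ is itself holomorphic, which is precisely what must be proved. The condition that $X=\mathrm{grad}_gh+J\mathrm{grad}_gf$ be real holomorphic is the vanishing of the single $J$-anti-invariant symmetric tensor $H^-_h-H^-_f(\cdot,J\cdot)$, which couples $h$ and $f$; its vanishing does not pointwise force $H^-_h$ and $H^-_f$ to vanish separately. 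To close the gap you must either show by integration by parts that the cross term $\int_M\bigl(H^-_h,\,H^-_f(\cdot,J\cdot)\bigr)v_g$ vanishes (this uses constancy of the scalar curvature; on a general K\"ahler metric both the cross-term argument and the decomposition fail), or, more efficiently on a K\"ahler--Einstein manifold, work with the complex holomorphy potential $u=h_X+if_X$ and the Bochner identity $\bar\partial^*\bar\partial\partial^{\#}u=\partial^{\#}\bigl(\tfrac12\Delta_gu-u\bigr)$, which shows that $X$ is holomorphic if and only if $\Delta_gu=2u$; since $\Delta_g$ is a real operator this equation splits into real and imaginary parts, giving $h_X,f_X\in E_1(g)$ and hence the decomposition. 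Without one of these steps the first assertion of the proposition remains unproved.
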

As a consequence of the proposition above, we see that $\lambda_1(g)=2$ if and only if the connected component of the identity ${\rm Aut}_0(M,J)$ of the group of biholomorphic automorphisms of $(M,J)$ is non-trivial. In this case, we have the following statement.
\begin{thm}\label{thm:main} 
Let $(M,g,J, \omega)$ be a compact K\"ahler--Einstein manifold with scalar curvature ${\rm Scal}_g=2m$, and suppose that the connected component ${\rm Aut}_0(M,J)$ of the automorphism group is non-trivial. 
Then the metric $g$ is $\lambda_1$-extremal in $\Omega= 2\pi c_1(M,J)$ if and only if there exist non-trivial $\lambda_1$-eigenfunctions $f_1, \ldots, f_{\ell}$ such that the zero mean-value part of the sum $\sum_{i=1}^\ell f_i^2$ is a (possibly trivial) $\lambda_1$-eigenfunction, that is
\begin{equation}
\label{criterion}
\sum_{i=1}^\ell f_i^2-\frac{1}{{\rm vol}(M,g)}\int_M(\sum_{i=1}^\ell f_i^2)v_g\in E_1(g).
\end{equation}

\end{thm}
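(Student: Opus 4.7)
My plan is to reduce the criterion $\sum_{i=1}^\ell L(f_i)=0$ from Theorem~\ref{l:extremal} to condition~\eqref{criterion} by establishing the pointwise identity
\begin{equation*}
L(f)=-\Delta_g\bigl(\abs{\nabla f}^2-f^2\bigr),\qquad f\in E_1(g).
\end{equation*}
Matsushima's Proposition~\ref{thm:matsushima} gives $\lambda_1(g)=2$ and says that each such $f$ is a Killing potential, so $Y=J\grad_g f$ is Killing. Since $\nabla J=0$, we have $\nabla Y=J\circ\nabla^2 f$, and the skew-symmetry of $\nabla Y$ forces $\nabla^2 f$ to be $J$-invariant, i.e.\ $(\nabla^2 f)(JX,JY)=(\nabla^2 f)(X,Y)$. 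A short computation from the torsion-free definition of $dd^c f$ then gives $(dd^c f)(X,Y)=2(\nabla^2 f)(X,JY)$, whence $\abs{dd^c f}^2=2\abs{\nabla^2 f}^2$ (evaluated in a $J$-adapted orthonormal frame the sum over $Je_j$ permutes the sum over $e_j$).

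Combining this with the Bochner formula applied to $\abs{df}^2$, and using $\Delta_g f=2f$ together with $\Ric_g=g$, yields $\Delta_g\abs{\nabla f}^2=2\abs{\nabla f}^2-2\abs{\nabla^2 f}^2=2\abs{\nabla f}^2-\abs{dd^c f}^2$. Subtracting the elementary $\Delta_g(f^2)=2f\Delta_g f-2\abs{\nabla f}^2=4f^2-2\abs{\nabla f}^2$ and comparing with formula~\eqref{L} specialized to $\lambda_k=2$ recovers the displayed identity for $L(f)$. Summing over $i$,
\begin{equation*}
\sum_{i=1}^\ell L(f_i)=-\Delta_g\Bigl(\sum_{i=1}^\ell\abs{\nabla f_i}^2-\sum_{i=1}^\ell f_i^2\Bigr),
\end{equation*}
so Theorem~\ref{l:extremal} tells us that $g$ is $\lambda_1$-extremal iff there exist non-trivial $f_i\in E_1(g)$ for which the expression in parentheses is a constant.

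To translate this constancy into~\eqref{criterion}, set $F=\sum_{i=1}^\ell f_i^2$. Using $\int_M\abs{\nabla f_i}^2 v_g=\int_M f_i\Delta_g f_i\,v_g=2\int_M f_i^2\,v_g$, integration of $\sum\abs{\nabla f_i}^2-F=c$ over $M$ pins down $c=\bar F:=(1/\Vol(M,g))\int_M F\,v_g$. Then
\begin{equation*}
\Delta_g F=4F-2\sum_{i=1}^\ell\abs{\nabla f_i}^2=4F-2(F+\bar F)=2(F-\bar F),
\end{equation*}
which is precisely the statement that $F-\bar F$ lies in $E_1(g)$, with the trivial case $F\equiv\bar F$ allowed. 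The converse direction reverses this computation. The only genuinely non-routine step I anticipate is the $J$-invariance of $\nabla^2 f$ for a Killing potential and the attendant identification $\abs{dd^c f}^2=2\abs{\nabla^2 f}^2$; once these are in hand, the rest is straightforward bookkeeping of Bochner-type formulas in the K\"ahler--Einstein setting.
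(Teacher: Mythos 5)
Your proof is correct and follows essentially the same route as the paper: both reduce to the criterion $\sum_i L(f_i)=0$ of Theorem~\ref{l:extremal} via the pointwise identity $L(f)=\Delta_g\bigl(f^2-\abs{\nabla f}^2\bigr)$ for $f\in E_1(g)$, then integrate to pin down the constant and conclude that the zero mean-value part of $\sum_i f_i^2$ lies in $E_1(g)$. The only (cosmetic) difference is in the intermediate step $\Delta_g\abs{\nabla f}^2=2\abs{\nabla f}^2-\abs{dd^cf}^2$, which you obtain from the Bochner formula together with the $J$-invariance of the Hessian of a Killing potential, while the paper uses $Dd^cf=\tfrac12\,dd^cf$ and a rough-Laplacian identity; these are equivalent computations.
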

\begin{proof} 
We show that the necessary and sufficient condition in Theorem~\ref{l:extremal} is equivalent to relation~\eqref{criterion}. First, note that by Proposition~\ref{thm:matsushima}, we have $\lambda_1(g)=2$, and any eigenfunction $f \in E_1(g)$ is a Killing potential. In particular, the form $d^cf$ is dual to a Killing vector field, and therefore $D d^cf =(1/2)dd^cf$. Thus, using the standard identity $D^*D\ |T|^2= 2(D^*DT, \ T) - 2(DT, DT)$ for a  tensor field $T$,
we obtain
\begin{multline*}
\Delta_g\abs{df}^2= D^*D(d^cf, d^cf) = 2\Big((D^*D (d^cf), d^cf)- (D(d^cf), D (d^c f))\Big)\\ 
=(\delta d \ d^c f, d^cf)-(dd^c f, dd^cf )= (\Delta_g d^cf, d^c f)- (dd^cf, dd^c f)\\
= 2\abs{df}^2-\abs{dd^c f}^2. \hspace{2cm}
\end{multline*}
Above we also used  the relation $\Delta_g df^c = 2 df^c$ and the fact that the tensor norm of $dd^cf$ is twice its norm as a differential $2$-form. On the other hand, we clearly have
\begin{equation}
\label{Delta2}
\Delta_g f^2=2f \Delta_g f - 2\abs{df}^2=4f^2 - 2\abs{df}^2.
\end{equation}
Combining the last two relations, for any eigenfunction $f\in E_1(g)$ we obtain 
\begin{equation}
\label{Delta3}
\Delta_g \Big(f^2 -\abs{df}^2\Big) = 4f^2 -4\abs{df}^2 +\abs{dd^c f}^2.
\end{equation}
Now comparing~\eqref{Delta3} with the relation $\sum L(f_i)=0$ in Theorem~\ref{l:extremal}, we conclude that 
the metric $g$ is $\lambda_1$-extremal if and only if there exist $\{f_1, \ldots, f_{\ell}\} \in E_1(g)$ such that
\begin{equation}\label{extremal1}
\sum_{i=1}^{\ell}(f_i^2 -\abs{df_i}^2)= c  
\end{equation}                                  
for some constant $c$.  Integrating the last relation, we see that the constant $c$ is minus the mean-value of the sum $\sum_{i=1}^{\ell} f_i^2$. Setting  $f_0:=\sum_{i=1}^{\ell} f_i^2 + c$, and using~\eqref{Delta2}, we further obtain
\begin{equation}\label{extremal2}
\begin{split}
\Delta_g f_0 &=  \sum_{i=1}^{\ell}\Big(4 f_i^2 - 2\abs{df_i}^2\Big) \\
                    &= 2 f_0 + 2 \Big(\sum_{i=1}^{\ell}( f_i^2 -\abs{df_i}^2) - c\Big)=2f_0.
\end{split}
\end{equation}
Thus, relation~\eqref{extremal1} is in turn equivalent to the hypothesis that there exist non-trivial $f_1, \ldots, f_{\ell} \in E_1(g)$ such that the zero mean-value part $f_0$ of the sum $\sum_{i=1}^{\ell} f_i^2$ is a $\lambda_1$-eigenfunction itself. 
\end{proof}

Theorem~\ref{thm:main} is a useful criterion for verifying whether the K\"ahler--Einstein metric on $(M,J)$ is $\lambda_1$-extremal in $2\pi c_1(M,J)$. We demonstrate this in the corollaries below. 
\begin{cor}\label{c:homogeneous} Let $(M,g,J,\omega)$ be a compact homogeneous K\"ahler--Einstein manifold. Then the metric $g$ is a $\lambda_1$-extremal metric within its K\"ahler class.
\end{cor}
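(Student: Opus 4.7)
The plan is to apply Theorem~\ref{thm:main} directly. After rescaling $g$ if necessary, we may assume $\mathrm{Scal}_g = 2m$, so that $\omega \in 2\pi c_1(M,J)$ and the setup of Theorem~\ref{thm:main} is available. Since $M$ is homogeneous and of positive dimension, the identity component $K$ of its isometry group acts transitively on $M$ and is non-trivial; Proposition~\ref{thm:matsushima} then guarantees that $E_1(g)$ is non-zero and that $\mathrm{Aut}_0(M,J)$ is non-trivial, so the hypothesis of Theorem~\ref{thm:main} holds. It remains to exhibit non-trivial eigenfunctions $f_1,\dots,f_\ell\in E_1(g)$ whose squared sum differs from a constant by a $\lambda_1$-eigenfunction.

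The natural choice is to take $f_1,\dots,f_\ell$ to be an $L^2(M,v_g)$-orthonormal basis of $E_1(g)$ and to show that the function
$$
F(x) = \sum_{i=1}^{\ell} f_i(x)^2
$$
is constant on $M$. Granted this, the zero mean-value part of $F$ is zero and is therefore (trivially) in $E_1(g)$, which is precisely the criterion~\eqref{criterion}. To prove constancy of $F$, I would use $K$-equivariance: any $\phi\in K$ commutes with $\Delta_g$ and preserves the Riemannian volume $v_g$, so pull-back by $\phi$ is a linear isometry of $E_1(g)$ equipped with its $L^2$ inner product. Consequently $\{\phi^*f_1,\dots,\phi^*f_\ell\}$ is another orthonormal basis of $E_1(g)$, and since $F$ admits the basis-free description as $\|\mathrm{ev}_x\|^2$, where $\mathrm{ev}_x\in E_1(g)^*$ is evaluation at $x$ with respect to the $L^2$ inner product, one has $F\circ\phi = F$. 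Transitivity of $K$ then forces $F$ to be constant.

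I do not anticipate any serious obstacle: everything reduces to a short representation-theoretic observation once Theorem~\ref{thm:main} is invoked. The only step that requires a small amount of care is verifying that the action of $K$ on $E_1(g)$ is indeed orthogonal with respect to the $L^2$ inner product, which is immediate from the fact that isometries preserve both the Laplacian and the volume form. With $F$ shown to be constant, condition~\eqref{criterion} is satisfied and Theorem~\ref{thm:main} yields the $\lambda_1$-extremality of $g$ in $2\pi c_1(M,J)$.
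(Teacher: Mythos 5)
Your proposal is correct and follows essentially the same route as the paper: both take an $L^2$-orthonormal basis of $E_1(g)$, observe that the isometry group acts orthogonally on $E_1(g)$ so that $\sum_i f_i^2$ is invariant under the transitive action and hence constant, and then invoke the criterion of Theorem~\ref{thm:main}. Your version merely spells out the basis-independence of $\sum_i f_i^2$ and the verification of the hypotheses of Theorem~\ref{thm:main}, which the paper leaves implicit.
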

\begin{proof} 
Let $\{ f_1, \ldots, f_{\ell}\}$ be an orthonormal basis of $E_1(g)$ with respect to the  $L_2$-global product $\langle \cdot, \cdot \rangle_g$ induced by $g$.  The group of K\"ahler isometries $G$ of $(M,g,J, \omega)$ acts isometrically on the space $(E_1(g), \langle \cdot, \cdot \rangle_g)$. It follows that the function
$$f= \sum_{i=1}^{\ell} f_i^2$$ is $G$-invariant, and since $G$ acts transitively on $M$, is constant. 
\end{proof}

As another application, we consider  {\it toric} K\"ahler--Einstein manifolds which have been studied in many places, see~\cite{Abreu0, Do2, Guillemin, WZ}, and for which the existence theory takes a fairly concrete shape.
\begin{cor}
\label{c:toric} 
Let $(M,g,J, \omega)$ be a compact K\"ahler--Einstein manifold  of real dimension $2m$ whose connected identity component ${\rm Aut}_0(M,J)$ of the automorphism group is the  complexification  of an $m$-dimensional real torus. Then the metric $g$ is not $\lambda_1$-extremal.
\end{cor}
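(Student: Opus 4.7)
The plan is to combine the criterion of Theorem~\ref{thm:main} with the explicit description of $E_1(g)$ provided by Matsushima's Proposition~\ref{thm:matsushima} and with the elementary fact that a non-trivial quadratic polynomial on $\R^m$ cannot agree with an affine polynomial on an open set. The hypothesis ${\rm Aut}_0(M,J) = T^m_\C$ together with Matsushima's decomposition $\mathfrak{h}(M) = \mathfrak{k}(M,g) \oplus J\mathfrak{k}(M,g)$ forces $\dim_\R \mathfrak{h}(M) = 2m$ and hence $\dim_\R \mathfrak{k}(M,g) = m$; moreover $\mathfrak{k}(M,g)$ must coincide with the Lie algebra $\mathfrak{t}^m$ of the compact torus and is therefore abelian. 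Choosing a basis of $\mathfrak{t}^m$, the associated zero-mean Killing potentials $\mu_1, \dots, \mu_m$ form a basis of $E_1(g)$, and $\mu = (\mu_1, \dots, \mu_m) \colon M \to \R^m$ is, up to a translation, the moment map of the Hamiltonian $T^m$-action. By the Atiyah--Guillemin--Sternberg convexity theorem applied to this maximal-dimensional action, the image $\mu(M) = P \subset \R^m$ is an $m$-dimensional convex polytope, and in particular contains a non-empty open subset of $\R^m$.

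Next I would apply the criterion of Theorem~\ref{thm:main}: the metric $g$ is $\lambda_1$-extremal if and only if there exist non-trivial $f_1, \dots, f_\ell \in E_1(g)$ such that the zero-mean part of $\sum_{i=1}^\ell f_i^2$ again lies in $E_1(g)$. Writing $f_i = \sum_{k=1}^m a_{ik}\, \mu_k$ and expanding, one obtains
$$ \sum_{i=1}^{\ell} f_i^2 = \sum_{k,l=1}^m B_{kl}\, \mu_k\, \mu_l, \qquad B = A^\top A, \quad A = (a_{ik}), $$
where $B$ is a positive semi-definite $m \times m$ matrix. Since some $f_i$ is non-trivial, $A \ne 0$, and hence $B \ne 0$ (one has $B_{kk} = \sum_i a_{ik}^2$, not all of which can vanish). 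The criterion~\eqref{criterion} therefore reduces to the existence of constants $c_0, c_1, \dots, c_m$ with
$$ \sum_{k,l=1}^m B_{kl}\, \mu_k(x)\, \mu_l(x) - \sum_{k=1}^m c_k\, \mu_k(x) - c_0 = 0 \qquad \text{for every } x \in M. $$

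Finally, pulling this identity back through $\mu$ asserts that the polynomial $Q(y_1, \dots, y_m) = \sum_{k,l} B_{kl}\, y_k y_l - \sum_k c_k\, y_k - c_0$ vanishes on $\mu(M) = P$; since $P$ has non-empty interior in $\R^m$, it follows that $Q \equiv 0$ on $\R^m$, and matching quadratic parts forces $B = 0$, a contradiction. Consequently no admissible collection of eigenfunctions exists and $g$ fails to be $\lambda_1$-extremal. The only step relying on an external ingredient is the full-dimensionality of $\mu(M)$, i.e.\ the standard convexity theorem for an effective Hamiltonian $T^m$-action on a compact symplectic $2m$-manifold; I would regard this as the main non-elementary input, everything else being linear algebra and the bookkeeping provided by Theorems~\ref{l:extremal} and~\ref{thm:main}.
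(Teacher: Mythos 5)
Your proof is correct and follows essentially the same route as the paper's: both identify $E_1(g)$ with the pullbacks of affine functions under the moment map of the $m$-torus (via Matsushima's theorem) and then rule out criterion~\eqref{criterion} by observing that a nonzero quadratic polynomial cannot coincide with an affine one on the full-dimensional moment polytope. The only cosmetic differences are that the paper invokes Delzant's theorem where you invoke Atiyah--Guillemin--Sternberg convexity, and that you make explicit the final step (positive semi-definiteness of $B=A^{\top}A$ and matching of quadratic parts on an open set) which the paper leaves implicit.
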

\begin{proof}
The assumption on ${\rm Aut}_0(M,J)$ implies that $(M,g,J,\omega)$ is a {\it toric} K\"ahler--Einstein metric in the sense of  \cite{Abreu0, Do2, Guillemin, WZ}. Indeed, by Proposition~\ref{thm:matsushima}, the connected component of the  isometry group of the K\"ahler--Einstein  metric is  a maximal connected compact subgroup of ${\rm Aut}_0(M,J)$. In our case, by assumption,  it must be a real $m$-dimensional torus $T$. The latter acts in a hamiltonian way (as any induced vector field is individually hamiltonian because $M$ is simply-connected, and $T$ is abelian), and by Delzant theorem \cite{Delzant}, the momentum map $\mu: M \to \mathfrak{t}^*$ sends $M$ onto a compact convex polytope in the dual vector space $\mathfrak{t}^*$ of the Lie algebra $\mathfrak{t} = {\rm Lie}(T)$. By Proposition~\ref{thm:matsushima} the pullback $f=(u, \mu) + \lambda$ to $M$ of an affine function $(u, x) + \lambda$ on $\mathfrak{t}^*$, where $u\in \mathfrak{t}$ and $\lambda\in \R$, defines an element in $E_1(g)\oplus \R$. Conversely, all elements of $E_1(g)\oplus \R$ are of this form, again  by Proposition~\ref{thm:matsushima} and our assumption for ${\rm Aut}_0(M,J)$. 

Suppose that the metric $g$ is $\lambda_1$-extremal. Then, by Theorem~\ref{thm:main}, there exist non-trivial eigenfunctions $f_{i}=(u_i, \mu) + \lambda_i$ such that the sum $\sum_{i=1}^{\ell} f_{i}^2$ is the pull-back  $f=(u,\mu) + \lambda$ of an affine function $(u,x) + \lambda$ on $\mathfrak{t}^*$. It follows that 
$$\sum_{i=1}^{\ell} \Big((u_i, x)+ \lambda_i\Big)^2 = (u,x) + \lambda,$$
which implies  $u_i=0$ for any $i$, and hence, $f_{i}=\lambda_i$ is constant for any $i$. Thus, we arrive at a contradiction with the hypothesis that the $f_i$'s are non-trivial.
\end{proof}
The above corollary, for instance,  shows that the K\"ahler--Einstein metric on ${\mathbb C}P^2$ blown-up at three points  in general position (see \cite{siu,T0}) is not $\lambda_1$-extremal, and in particular, can not  be a maximizer for $\lambda_1$ in its K\"ahler class.

\vspace{0.2cm}
As a final example, we consider non-homogeneous K\"ahler--Einstein metrics on projective bundles over the product of compact K\"ahler--Einstein manifolds with positive scalar curvatures that have been found and studied by Koiso and Sakane~\cite{Koi90, KS86, KS88, Sak}, see also \cite{hfkg4,DW,PS} for alternative treatments. It is convenient to use the characterization from \cite{hfkg4} saying that these are K\"ahler--Einstein metrics admitting a {\it Hamiltonian} $2$-form of order $1$,  in the sense of the theory in \cite{hfkg1,hfkg2}.  
We can then prove the following statement.
\begin{cor}
\label{c:hamiltonian-form} 
Let $(M, g, J, \omega)$ be a compact K\"ahler--Einstein manifold that is different from $\C P^m$ and  which admits a Hamiltonian $2$-form of order $\geqslant 1$. Then the metric $g$ is not $\lambda_1$-extremal.
\end{cor}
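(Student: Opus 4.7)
The plan is to apply the criterion of Theorem~\ref{thm:main} in conjunction with the explicit structural results for such K\"ahler--Einstein metrics provided by the Hamiltonian $2$-form framework of \cite{hfkg1,hfkg4}. First I would recall that a compact K\"ahler--Einstein manifold admitting a Hamiltonian $2$-form $\phi$ of order $s\geqslant 1$ carries an isometric Hamiltonian $T^s$-action whose momentum components are, up to additive constants, the $s$ non-constant eigenvalues $z_1,\ldots,z_s$ of $\phi$; each $z_j$ is therefore a Killing potential and, after centering, lies in $E_1(g)$. The generalized Calabi construction furthermore gives that the pairings $\langle dz_i,dz_j\rangle_g$ are smooth functions of $z=(z_1,\ldots,z_s)$ alone, vanishing on the boundary of the image polytope $\Delta\subset\R^s$, and the Einstein condition amounts to an explicit system of ODEs for these profile functions (the Koiso--Sakane equations).

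As a warm-up I would test \eqref{criterion} with a single $f_1=z_j$: using $\Delta_g z_j=2z_j$ and the identity $\Delta_g f^2=2f\Delta_g f-2|df|^2$ from \eqref{Delta2}, one finds $\Delta_g z_j^2=4z_j^2-2|dz_j|^2$, so the required condition $z_j^2-c\in E_1(g)$ is equivalent to $|dz_j|^2=z_j^2+c$ on the range of $z_j$, which fails because $|dz_j|^2\geqslant 0$ and vanishes at the endpoints. A single $z_j$ thus never realizes \eqref{criterion}, and the task is to rule out more elaborate collections.

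For the general criterion I would decompose each $f_i\in E_1(g)$ into $T^s$-weight components under the action generated by the $K_j=J\grad_g z_j$. Since $f_0=\sum_i f_i^2-c$ must again be a Killing potential, its $T^s$-weight content is constrained, and averaging the identity over $T^s$ reduces the problem to the invariant sector. By \cite{hfkg4} the $T^s$-invariant Killing potentials are precisely the affine combinations of $z_1,\ldots,z_s$ together with appropriately scaled pullbacks of Killing potentials of the base K\"ahler--Einstein factors in the Calabi construction. Expanding $\sum f_i^2$ and collecting weight-zero contributions then yields a polynomial relation in $(z_1,\ldots,z_s)$ whose left-hand side necessarily involves the profile functions $\langle dz_i,dz_j\rangle_g(z)$; compatibility with being an affine combination of the $z_j$'s plus a base Killing potential forces these profiles to be quadratic polynomials of a very specific type, which the Koiso--Sakane ODE admits only in the Fubini--Study $\C P^m$ case excluded by hypothesis.

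The hard part will be this last reduction: organizing the $T^s$-weight squaring systematically, handling the cross-weight contributions and their interaction with base-factor Killing potentials, and extracting the contradiction from the Koiso--Sakane ODEs. The explicit description of Killing potentials in \cite{hfkg1,hfkg4} together with the fact that the only profile solution compatible with the required quadratic identity corresponds to Fubini--Study should provide the needed rigidity, but the combinatorial bookkeeping with $T^s$-weights is the delicate step.
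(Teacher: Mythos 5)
Your overall strategy (apply Theorem~\ref{thm:main}, exploit the torus action coming from the Hamiltonian $2$-form, and use the explicit description of Killing potentials for the generalized Calabi construction) is the right one and matches the paper's in outline, but there is a genuine gap at the step where you ``reduce to the invariant sector''. Averaging the identity $\sum_i f_i^2 = f_0 + c$ over $T^s$ does \emph{not} reduce the problem to $T^s$-invariant eigenfunctions: decomposing each $f_i$ into weight components (which are again $\lambda_1$-eigenfunctions) and averaging produces a new sum of squares of the \emph{weight components}, and these are not $T^s$-invariant for nonzero weights. So the description of the $T^s$-invariant Killing potentials alone does not cover the functions you actually need to control. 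The paper closes exactly this gap by a separate argument you are missing: using the rigidity theorem of Fedorova--Kiosak--Matveev--Rosemann (any Hamiltonian $2$-form on $(M,g,J,\omega)\ncong \C P^m$ is a linear combination of $\phi$ and $\omega$) together with a trace computation, one shows $\Phi^*\phi=\phi$ for every $\Phi\in I_0(M,g)$, hence the torus $T$ is \emph{central} in $I_0(M,g)$ and \emph{every} Killing potential is automatically $T$-invariant. Note that this is also where the hypothesis $M\ncong\C P^m$ enters the paper's proof; in your sketch that hypothesis is only invoked at the very end, so without the centrality step your argument has no mechanism for using it where it is needed.

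The second issue is your endgame. Once all elements of $E_1(g)$ are known to have the form $f=\sum_\alpha\bigl((u_\alpha,\mu)+\lambda_\alpha\bigr)f_\alpha+(u,\mu)+\lambda$, the contradiction is purely algebraic and does not require the profile functions $\langle dz_i,dz_j\rangle_g$ or the Koiso--Sakane ODEs at all: the paper restricts to a $\C P^\ell$ fibre, applies the convex-polytope argument from the proof of Corollary~\ref{c:toric} to kill the $\mu$-dependence, and then uses the functional independence of Killing potentials pulled back from distinct base factors together with the non-vanishing of the $u_\alpha$ (from the classification in \cite[Thm.~5]{hfkg2}) to force all the $f_n$ to be constant. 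Your proposed route --- extracting a quadratic constraint on the profile functions and showing the Einstein ODEs admit such a solution only for Fubini--Study --- is precisely the part you flag as ``the hard part'' and is left entirely unexecuted; it is also unnecessary. Your warm-up computation with a single $z_j$ is correct (and amounts to the one-variable case of the polytope argument), but as it stands the proposal is a plan whose two essential steps --- the reduction to invariant potentials and the final rigidity --- are respectively flawed and missing.
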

\begin{proof} 
As follows from the theory in \cite{hfkg1,hfkg2}, a Hamiltonian $2$-form $\phi$ on $(M,g, J, \omega)$ gives rise to an $\ell$-dimensional (real) torus $T$ in the connected component  $I_0(M,g)$ of the isometry group of $(g, J, \omega)$, where $\ell \geqslant 1$ is the order of $\phi$.  By the general classification~\cite[Thm.~5]{hfkg2} and \cite[Prop.~16]{hfkg1}, the corresponding moment map $\mu : M \to \mathfrak{t}^*$ sends $M$ to a Delzant simplex in the dual vector space of $\mathfrak{t}= {\rm Lie}(T)$. Since  $M$ is simply connected by the K\"ahler--Einstein assumption (see \cite[Sec.~2.1]{hfkg4} for the refinement in this case) and $(M,J) \ncong\C P^m$,  we conclude that $(M,J)$ is the total space of a holomorphic projective bundle $P({\mathcal E}_0 \oplus {\mathcal E}_1 \oplus \cdots \oplus {\mathcal E}_{\ell}) \to S$  over the product $S=\prod_j S_j$ of K\"ahler--Einstein manifolds $(S_j, g_j, J_j, \omega_j)$,  where ${\mathcal E}_i$ are projectively-flat holomorphic vector bundles over $S$,  satisfying certain topological conditions.  Moreover, the metric $g$ on $M$ is obtained by the {\it generalized Calabi construction}  associated to this bundle, see \cite{hfkg5} for a detailed treatment of this class of metrics, but the existence of a Hamiltonian $2$-form of order $\ell\ge 2$ is a more restrictive condition for the metric (and the bundles).  

To describe the eigenspace $E_1$ corresponding to $\lambda_1(g)$, we first show that the torus $T$ lies in the centre of $I_0(M,g)$. By the general theory in~\cite{hfkg1,hfkg2}, the torus $T$ is generated by the Hamiltonian vector fields corresponding to the elementary symmetric functions of the $m$-eigenvalues of the Hamiltonian form $\phi$ viewed as a Hermitian operator via the K\"ahler  form $\omega$.  Thus, it is sufficient to show that $\phi$ is invariant under any  isometry $\Phi \in I_0(M,g)$. As shown in \cite{FKMR}, when $(M,J)\ncong \C P^m$ any other Hamiltonian $2$-form $\tilde \phi$ on $(M, g, J, \omega)$ must be a linear combination of $\phi$ and $\omega$.  Since the property of being Hamiltonian is preserved by K\"ahler isometries, we see that $\tilde \phi =\Phi^*(\phi) = a\omega + b \phi$ for some constants $a,b$. 
In addition, we also have 
$$
{\rm tr}_{\omega}(\phi)= {\rm tr}_{\omega} \tilde \phi=a m+b{\rm tr}_{\omega}\phi.
$$
As a Hamiltonian $2$-form of order $\ell \ge 1$, $\phi$ cannot have a constant trace (otherwise it must be parallel by its very definition, and thus of order $\ell=0$, see \cite{hfkg1}), and we obtain that $a=0$ and $b=1$; in other words, $\Phi^*(\phi)=\phi$.

Denote by $\mathfrak{i}(M,g)$ the Lie algebra of $I_0(M,g)$. Since $M$ is Fano, it is simply connected, and we may identify $\mathfrak{i}(M,g)$ with the space of zero mean-value Killing potentials. It follows from our previous argument that the centralizer of $\mathfrak{t}$ in $\mathfrak{i}(M,g)$ equals $\mathfrak{i}(M,g)$, and we may use the description of $\mathfrak{i}(M,g)$ in \cite[Lemma~5]{hfkg5} in terms of the pullback metric on the $T$-equivariant blow-up
$$\hat M= P\Big(\bigoplus_{k=0}^{\ell} \cO(-1)_{\mathcal{E}_k}\Big) \to \hat S$$
of $(M,J)$ along  the sub-manifolds $P(\mathcal{E}_i)\cong \C P^{r_i-1}\times S$, where $r_i$ is the rank of  $\mathcal{E}_i$ and
$$\hat S= P(\mathcal{E}_0)\times_S \cdots \times_S P(\mathcal{E}_{\ell}) \cong \Big(\prod_{i=0}^{\ell} \C P^{r_i-1}\Big)\times \Big(\prod_{j}S_j\Big)$$
is equipped with the product of the K\"ahler--Einstein metrics on its factors. (Above we use the facts that $S$ is simply connected and the bundles $\mathcal{E}_i$ are projectively flat.) Note also that the induced action of $T$ on  $\hat M$ arises from the diagonal action on each fibre of $\Big(\bigoplus_{k=0}^{\ell} \cO(-1)_{\mathcal{E}_k}\Big) \to \hat S$, so that $\hat M$ has a structure of  a toric $\mathbb{C}P^{\ell}$-bundle over $\hat S$, such that the pull-back of $\omega$ to $\hat M$ restricts to each fibre to define a $T$-invariant symplectic form on $\mathbb{C}P^{\ell}$ whose momentum map is  the pull-back of $\mu$ to $\hat M$.

Denote by $(S_\alpha,g_\alpha, J_\alpha, \omega_\alpha)$ a K\"ahler-Einstein factor in the definition of $\hat S$ , where $\alpha$ ranges over the values of the $i$'s and $j$'s. As is shown in the proof of \cite[Lemma~5]{hfkg5}, any Killing potential $f$ on $(M,g, J, \omega)$  when pulled-back to $\hat M$ has the form
$$f = \sum_{\alpha} \Big((u_{\alpha}, \mu) + \lambda_{\alpha}\Big)f_{\alpha}  + \Big((u, \mu) + \lambda\Big),$$
where $f_\alpha$ is the pull-back to $\hat M$ of a Killing potential on $S_{\alpha}$, $u, u_{\alpha} \in \mathfrak{t}$, and $\lambda, \lambda_{\alpha} \in \R$. Here the values $u_{\alpha}, \lambda_{\alpha}$ are determined by $(M,J)$. More precisely,  the $u_{\alpha}$'s can be expressed in terms of  the degrees of ${\mathcal E}_i$ over $S_{j}$ while the $\lambda_{\alpha}$'s are determined by the K\"ahler class of $\omega$.

By Theorem~\ref{thm:main}, the hypothesis that  the metric $g$  is $\lambda_1$-extremal reduces to the relation
$$
\sum_{n=1}^{r} \Big(\sum_{\alpha} \Big((u_{\alpha}, \mu) + \lambda_{\alpha}\Big)f_{\alpha,n}  + (u_n, \mu) + \lambda_n\Big)^2= \sum_{\alpha} \Big((u_{\alpha}, \mu) + \lambda_{\alpha}\Big)f_{\alpha}  + (u, \mu) + \lambda,
$$
where $r\geqslant 2$, and  the functions
$$f_n:=\Big(\sum_{\alpha} \Big((u_{\alpha}, \mu) + \lambda_{\alpha}\Big)f_{\alpha,n}  + (u_n, \mu) + \lambda_n\Big)$$
are non-constant. Restricting to a $\mathbb{C} P^{\ell}$ fibre over $\hat S$, the argument from the proof of Corollary~\ref{c:toric} shows that  the following relations hold:
\begin{equation}\label{cond}
\sum_{\alpha} u_{\alpha} f_{\alpha,n} + u_n=0, \qquad\sum_{\alpha} u_{\alpha} f_{\alpha} + u=0,
\end{equation}
where $n=1,\ldots, r$ is arbitrary. Here $f_{\alpha,n}$ and $f_{\beta, n}$ are the pullbacks of Killing potentials on $S_\alpha$ and $S_\beta$ respectively, and hence, are functionally independent  when $\alpha \neq \beta$. Thus, we see that $u_{\alpha} f_{\alpha,n}$ and $u_{\alpha} f_{\alpha}$ are constant vectors in the Lie algebra $\mathfrak{t}$ for any $\alpha$. By the classification in \cite[Thm.~5]{hfkg2}, we get  that $u_{\alpha} \neq 0$; relation~$(40)$ in \cite{hfkg2} cannot vanish for each $j$. Hence, the potentials $f_{\alpha,n}$ and $f_{\alpha}$ are constant, and by the relations in~\eqref{cond}, so are the functions
$$f_n=\Big(\sum_{\alpha} \Big((u_{\alpha}, \mu) + \lambda_{\alpha}\Big)f_{\alpha,n}  + (u_n, \mu) + \lambda_n\Big)= \sum_{\alpha} \Big(\lambda_{\alpha}f_{\alpha,n}\Big)  + \lambda_n.$$
Thus, we arrive at a contradiction. \end{proof}

\subsection*{Acknowledgements} 
The project has originated out of a number of discussions the authors had while GK was visiting the Centre de recherches math\'ematiques in Montr\'eal. Its hospitality is gratefully acknowledged. During the work on the paper VA and DJ were supported by NSERC Discovery Grants; DJ was also partially supported by FQRNT. The authors are grateful to the referee for carefully reading the manuscript and suggesting a number of improvements.

\end{document}